\documentclass[11pt]{amsart}
\usepackage{graphicx}
\usepackage{epstopdf}
\usepackage{url}
\usepackage{latexsym}
\usepackage{amsfonts,amsmath,amssymb}
\newtheorem{theorem}{Theorem}[section]
\newtheorem{proposition}[theorem]{Proposition}
\newtheorem{lemma}[theorem]{Lemma}

\newtheorem{corollary}[theorem]{Corollary}
\newtheorem{question}[theorem]{Question}
\newtheorem{example}[theorem]{Example}




\date{\today}

\begin{document}

\title[Unique longest increasing subsequences]{Pattern avoiding permutations and involutions with a unique longest increasing subsequence}

\author{Mikl\'os B\'ona \and Elijah DeJonge}
\address{Department of Mathematics, University of Florida, $358$ Little Hall, PO Box $118105$,
Gainesville, FL, $32611-8105$ (USA)}
\email{bona@ufl.edu}

\begin{abstract} We investigate permutations that avoid a pattern of length three and have a {\em unique}
longest increasing subsequence. 
\end{abstract}

\maketitle

\section{Introduction}
Let $p=p_1p_2\cdots p_n$ be a permutation. 
An {\em increasing subsequence} in $p$ is just a subset of entries $p_{j_1}<p_{j_2}< \cdots < p_{j_k}$ so that 
$j_1<j_2<\cdots <j_k$. Note that the positions $j_1,j_2,\cdots, j_k$ are {\em not} required to be consecutive. 

We say that $p$ has a {\em unique longest increasing subsequence}, or ULIS, if $p$ has an increasing subsequence that
is longer than all other increasing subsequences. For instance, 2314 has a ULIS, namely the sequence 234, but $p=246135$ does not,
since  246, 245, 235, and 135 are all increasing subsequences of maximal length in $p$. 

The number of all permutations of length $n$ that have a unique longest increasing subsequence may be quite difficult to determine. 
Sequence A167995 in the Online Encyclopedia of Integer Sequences \cite{oeis} contains these numbers for $n\leq 15$.

In this paper, we consider permutations that avoid a given pattern $q$ of length three that have a ULIS. 
 We say that a permutation $p$ {\em contains} the pattern $q=q_1q_2\cdots q_k$ 
if there is a $k$-element set of indices $i_1<i_2< \cdots <i_k$ so that $p_{i_r} < p_{i_s} $ if and only
if $q_r<q_s$.  If $p$ does not contain $q$, then we say that $p$ {\em avoids} $q$. For example, $p=3752416$ contains
$q=2413$, as the first, second, fourth, and seventh entries of $p$ form the subsequence 3726, which is order-isomorphic
to $q=2413$.  A recent survey on permutation 
patterns can be found in \cite{vatter} and a book on the subject is \cite{combperm}. The basic facts that we will simply call well-known are
explained in detail in these sources.

If $q$ is any given pattern of length three,
then it is well known that the number of {\em all} permutations of length $n$ that avoid $q$ is the Catalan number $C_n={2n\choose n}/(n+1)$. 
As structures counted by the Catalan numbers have been extensively studied, we find it surprising that the questions discussed in this paper 
prove to be both difficult and unexplored. The diverse nature of the results we prove will also be interesting as we will see that depending on the pattern $q$, 
the portion of $q$-avoiding permutations that have a ULIS may converge to a positive constant, converge to 0 at a subexponential speed, or converge to 
zero at an exponential speed.

 It is straightforward to see that
if $p=p_1p_2\cdots p_n$ has a ULIS, then so does its group-theoretical inverse $p^{-1}$, and so does the reverse complement $p^{revc}=((n+1-p_n)\: (n+1-p_{n-1}) \: \cdots \: (n+1-p_1))$ of $p$. Therefore, the six possible choices of $q$ can be reduced to four, namely 123, 132, 231, and 321, and we will see that one of them, 123, leads to a trivial situation.

We denote by $u_n(q)$ the number of permutations of length $n$ that avoid the pattern $q$ and have a ULIS, and by $u_q(z)$ the ordinary generating function
$u_q(z)=\sum_{n\geq 0}u_n(q)z^n$, with $u_0(q)=1$. Similarly, we denote by $i_n(q)$  the number of $q$-avoiding involutions of length $n$ which have a ULIS. 

\section{The pattern 231}
\subsection{Permutations}
Let $p=p_1p_2\cdots p_n$ be a permutation that avoids 231. 
Let $p_i=n$. Let $L=p_1p_2\cdots p_{i-1}$, and let $R=p_{i+1}\cdots p_n$. Note that all entries in $L$ must be smaller than
all entries of $R$.  If $p$ has a
ULIS, then it is necessary for both $L$ and $R$ to have a ULIS (since increasing sequences in $L$ or $R$ can be extended to increasing sequences in $p$). 
For this discussion, we consider empty strings as permutations having a ULIS. 

On the other hand, if $L$ and $R$ both have a ULIS, then the only case in which $p$ fails to have a ULIS is when $R$ consists of exactly one entry, (which is necessarily the entry $n-1$). 
This leads to the generating function identity
\[u_{231}(z)=u_{231}(z) z (u_{231}(z)-z) +1 .\]

This yields 
\begin{equation} \label{genf-231} u_{231}(z) = \frac{1+z^2-\sqrt{1-4z+2z^2+z^4}}{2z},\end{equation}
showing that the first few numbers $u_n(231)$, starting with $n=0$ are, 1, 1, 1, 2, 5, 13, 35, 97, 275, 794. 
This is sequence A082582 in the Online Encyclopedia of Integer Sequences \cite{oeis}. The dominant singularity of 
$U(z)$ is close to 0.2956, implying that the exponential growth rate of the sequence $u_n(231)$ is the reciprocal of that number, 
that is, close to 3.383. 

The following simple observation will be useful for us one more time in this paper, so we formally announce it. 
We say that a pattern is {\em indecomposable} if it cannot be cut into two parts so that every entry before the cut is smaller than every entry after the cut. 
For instance, $q=3142$ is indecomposable, but $q=21453$ is not as it can be cut between the entries 1 and 4. 
 
\begin{proposition} \label{fekete-lim}
Let $q$ be an indecomposable pattern. Then the limit $\lim_{n\rightarrow \infty} \left ( u_n(q) \right) ^{1/n} $ exists.
\end{proposition}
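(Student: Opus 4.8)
The plan is to apply Fekete's lemma to the sequence $a_n=\log u_n(q)$, so the real work is to prove the supermultiplicative inequality $u_{m+n}(q)\ge u_m(q)\,u_n(q)$, equivalently the superadditivity $a_{m+n}\ge a_m+a_n$. To get this I would use the direct sum operation: given permutations $\sigma$ of length $m$ and $\tau$ of length $n$, let $\sigma\oplus\tau$ be the permutation of length $m+n$ obtained by writing $\sigma$ first and then $\tau$ with every entry of $\tau$ raised by $m$, so that the first $m$ positions of $\sigma\oplus\tau$ carry exactly the values $1,\dots,m$. The goal is to show that if $\sigma$ and $\tau$ each avoid $q$ and each have a ULIS, then $\sigma\oplus\tau$ does as well, and that distinct pairs $(\sigma,\tau)$ yield distinct permutations $\sigma\oplus\tau$ (from $\sigma\oplus\tau$ one recovers $\sigma$ and $\tau$ by splitting at position $m$, once $m$ is fixed). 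This exhibits an injection from pairs of $q$-avoiding ULIS permutations of lengths $m$ and $n$ into the $q$-avoiding ULIS permutations of length $m+n$, which is exactly the inequality we want.

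First I would verify that $\sigma\oplus\tau$ avoids $q$. An occurrence of $q$ contained entirely in the first $m$ positions, or entirely in the last $n$ positions, would be an occurrence of $q$ in $\sigma$ or in $\tau$, which is impossible; hence an occurrence of $q$ would have to use at least one entry from each of the two blocks. But every entry of the first block both precedes and is smaller than every entry of the second block, so such an occurrence would realize a pattern that can be cut between its entries coming from $\sigma$ and its entries coming from $\tau$ — contradicting the indecomposability of $q$. This is precisely where the hypothesis on $q$ is used.

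Next I would check the ULIS property. Since every entry of the first block lies below every entry of the second block, every increasing subsequence of $\sigma\oplus\tau$ is the concatenation of an increasing subsequence of $\sigma$ with an increasing subsequence of $\tau$, and conversely any such concatenation is increasing; therefore the length of the longest increasing subsequence of $\sigma\oplus\tau$ equals $\ell(\sigma)+\ell(\tau)$, where $\ell(\cdot)$ denotes that length. Consequently a longest increasing subsequence of $\sigma\oplus\tau$ must restrict to a longest increasing subsequence of $\sigma$ and to a longest increasing subsequence of $\tau$, and since each of those is unique by assumption, the longest increasing subsequence of $\sigma\oplus\tau$ is unique as well. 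Together with injectivity of $(\sigma,\tau)\mapsto\sigma\oplus\tau$, this establishes $u_{m+n}(q)\ge u_m(q)\,u_n(q)$.

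Finally, $u_n(q)\ge 1$ for every $n$ (for instance, the increasing permutation of length $n$ avoids $q$ and has a ULIS whenever $q$ is not itself increasing, which holds for any indecomposable pattern of length at least two), so $a_n$ is well defined and nonnegative; and $u_n(q)\le C_n<4^n$, so $a_n/n\le\log 4$ is bounded above. By Fekete's lemma the superadditivity of $(a_n)$ forces $\lim_{n\to\infty} a_n/n=\sup_n a_n/n$ to exist and be finite, and exponentiating yields that $\lim_{n\to\infty}\left(u_n(q)\right)^{1/n}$ exists. The only genuinely delicate points are the use of indecomposability to exclude occurrences of $q$ straddling the two blocks, and the observation that uniqueness of the longest increasing subsequence in each summand propagates to the direct sum; everything else is the routine Fekete argument.
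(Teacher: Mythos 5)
Your proposal is correct and follows essentially the same route as the paper: the direct sum $\sigma\oplus\tau$ of two $q$-avoiding ULIS permutations is again $q$-avoiding (using indecomposability of $q$) and has a ULIS, giving the superadditive inequality $u_m(q)\,u_n(q)\le u_{m+n}(q)$, after which Fekete's Lemma yields the limit. The paper states this more tersely, but the substance is identical; your added verifications of avoidance, uniqueness, and boundedness are just the details the paper leaves implicit.
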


\begin{proof}
 For any positive $m$ and $n$, the inequality $u_{m}(q) u_{n}(q) \leq u_{m+n} (q)$ holds, 
since if $p$ and $r$ are   q-avoiding permutations with a ULIS, then so is $p\oplus r$, that is, the permutation of length $m+n$ that starts with $p$ on the entries $\{1,2,\cdots ,m\}$, 
then continues with a $r$-pattern taken on the entries $\{m+1,m+2,\cdots ,m+n\}$. Therefore, by Fekete's Lemma on superadditive sequences, the limit of the sequence
$\lim_{n\rightarrow \infty} \left ( u_n(q) \right) ^{1/n}$ exists. 
\end{proof}

We know that the limit in Proposition \ref{fekete-lim} is finite, since it is at most as large as the analogously defined limit for {\em all} $q$-avoiding permutations, and that is well-known to be finite. 

Proposition \ref{fekete-lim} and (\ref{genf-231}) together imply that $ \lim_{n\rightarrow \infty} \left ( u_n(231) \right) ^{1/n} \approx 3.383 $. We could have determined this
without Proposition \ref{fekete-lim}, because (\ref{genf-231}) provided an explicit formula for $u_{231}(z)$. In Section \ref{sec-321}, we will not have such a form for the relevant generating
function, but Proposition \ref{fekete-lim} will still enable us to compute corresponding limit. 

\subsection{Involutions} If $p$ is a 231-avoiding involution, then $p$ also avoids the inverse of 231, that is, the pattern 312. That means that $p$ is a {\em layered} permutation,
meaning that it consists of a series of decreasing subsequences of consecutive entries, (the layers), so that the entries decrease within each layer but increase among the layers, 
as in 32154876. The only way for such a permutation to have a ULIS is by having layers of length one only. That happens if and only if $p$ is the identity permutation, 
so $i_{n}(231)=1$ for all $n$. 

\section{The pattern 132}
\subsection{Permutations}
From our perspective, the pattern 132 behaves in surprising ways. This  leads to some intriguing questions, and even in the case when the answer to those questions is known, there is room for
simpler proofs. 

We say that a permutation $p$ is {\em skew indecomposable} if it is not possible to cut $p$ into two parts so that
each entry before the cut is larger than each entry after the cut. For instance, $p=3142$ is skew indecomposable,
but $r=346512$ is not as we can cut it into two parts by cutting between entries 5 and 1, to obtain $3465|12$.

 If $p$ is not skew indecomposable, then there is a unique way to cut $p$ into  nonempty skew indecomposable strings
$s_1,s_2,\cdots ,s_\ell$ of consecutive entries so that each entry of $s_i$ is larger than each entry of $s_j$ if $i<j$. We call these
strings $s_i$ the {\em skew blocks} of $p$. For instance, $p=67|435|2|1$ has four skew blocks, while skew indecomposable
permutations have one skew block. 

There is a well-known bijection $\psi$ from the set of all 132-avoiding permutations of length $n$ to the set of all plane rooted unlabeled trees on $n+1$ vertices.
If $p$ is a 132-avoiding, skew indecomposable permutation of length $n$, then it necessarily ends in $n$, so it is of the form $Ln$. We then define $\psi(p)$ to be the
plane tree whose root has only one child, and the subtree rooted at that child is precisely $\psi(L)$.  If $p$ is a skew decomposable 132-avoiding permutation of length $n$, then
$p=B_1B_2\cdots B_j$, where the $B_i$ are the skew blocks of $p$, each of which is necessarily indecomposable, and so ends with its largest entry. Therefore, for all $i$, the tree
 $\psi(B_i)$ is a tree
in which the root has only one child.  Then $\psi(p)$ is obtained by taking the sequence $\psi(B_1), \psi(B_2), \cdots \psi(B_j)$ in this order, and contracting the roots of these
$j$ trees into one vertex, which will be the root of $\psi(p)$. 

It is then easy to see that the number of longest increasing subsequences of $p$ is equal to the number of leaves in $\psi(p)$ that are at maximum distance from the root. Then the following
is known. 

\begin{theorem} \label{th-132} Let us select a rooted plane unlabeled tree on $n$ vertices uniformly at random, and let $a_{n,k}$ be the probability that the selected tree has $k$ leaves at maximum distance from the root. 
Then \[\lim_{n\rightarrow \infty} a_{n,k} =2^{-k}. \]
\end{theorem}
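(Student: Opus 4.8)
The plan is to pass to Dyck paths, record the relevant statistic as a simple path functional, solve the height‑restricted generating functions exactly, and finish by singularity analysis.

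\textbf{Reduction.} I would first use the standard contour bijection: rooted plane trees on $n$ vertices correspond to Dyck paths of semilength $n-1$, a leaf at depth $d$ corresponding to a peak at height $d$. A Dyck path cannot ascend from its maximal level, so every visit of the path to its maximum is an isolated peak; hence the number of leaves of the tree at maximal distance from the root equals the number of times the corresponding Dyck path attains its maximum. Writing $\Phi_h(z,u)$ for the generating function of Dyck paths of height exactly $h$, with $z$ marking semilength and $u$ marking the number of visits to level $h$, and $\Phi=\sum_{h\ge 0}\Phi_h$, the assertion becomes $[z^{m}u^k]\Phi\sim 2^{-k}C_{m}$, i.e. $a_{n,k}=[z^{n-1}u^k]\Phi/C_{n-1}\to 2^{-k}$.

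\textbf{Exact generating functions.} Let $D^{\le h}(z)$ count Dyck paths of height $\le h$, so $D^{\le h}=1/(1-zD^{\le h-1})$ and $D^{\le h}\to D(z)=\sum_m C_m z^m$ as $h\to\infty$. Decomposing a path of height exactly $h\ge 1$ at its successive visits to level $h$ (a prefix rising to level $h-1$, then single ``up--down'' spikes to level $h$ separated by arbitrary excursions staying at height $\le h-1$, then a suffix returning to $0$) gives $\Phi_h(z,u)=\bigl(P_{h-1}(z)\bigr)^{2}\,\dfrac{zu}{1-zuD^{\le h-1}(z)}$, where $P_{h-1}$ counts paths from $0$ to $h-1$ inside $[0,h-1]$; unwinding, $\alpha_h(z):=z\,(P_{h-1})^2=z^{h}\prod_{j=0}^{h-1}D^{\le j}(z)^2$ and
\[[u^k]\Phi(z,u)=\sum_{h\ge 1}\alpha_h(z)\,\bigl(zD^{\le h-1}(z)\bigr)^{k-1}.\]
The decisive move is the change of variable $q=\theta(z):=T/(1-T)$ with $T=zD$, so $z=q/(1+q)^2$ and $q\to 1$ as $z\to\tfrac14$. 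Then $D^{\le j}=[2]_q[j+1]_q/[j+2]_q$ with $[m]_q=(1-q^m)/(1-q)$, the products telescope, $\alpha_h=q^h/[h+1]_q^2$, $zD^{\le h-1}=q[h]_q/\bigl([2]_q[h+1]_q\bigr)$, and
\[[u^k]\Phi=\frac{q^{k-1}(1-q)^2}{(1+q)^{k-1}}\sum_{h\ge 1}\frac{q^{h}(1-q^{h})^{k-1}}{(1-q^{h+1})^{k+1}}.\]

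\textbf{Why $2^{-k}$ and the singularity analysis.} The factor $\dfrac{zu}{1-zuD^{\le h-1}}$ is the generating function of $1+\mathrm{Geom}$ in $u$ with ratio $\beta_h(z)=zD^{\le h-1}(z)$, and $\beta_h(\tfrac14)\to zD(\tfrac14)=T(\tfrac14)=\tfrac12$ as $h\to\infty$; so heuristically, conditioning the tree to be tall makes the number of deepest leaves $1+\mathrm{Geom}(\tfrac12)$, whose mass at $k$ is $2^{-k}$. To make this rigorous one checks that $z\mapsto q$ sends a $\Delta$‑domain about $z=\tfrac14$ into a sector at $q=1$ where the series converges, and then extracts the singular expansion $\Phi(z,u)=g(u)-\dfrac{2u}{2-u}\sqrt{1-4z}+o\!\bigl(\sqrt{1-4z}\bigr)$, equivalently $[u^k]\Phi=A_k-2^{1-k}\sqrt{1-4z}+o(\sqrt{1-4z})$; the $\sqrt{1-4z}$‑term is produced by the range $h\asymp(1-4z)^{-1/2}$, exactly where $\beta_h\approx\tfrac12$. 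For $k=1$ this is the classical asymptotic $\sum_{n\ge1}\sigma_1(n)q^n\sim\frac{\pi^2}{6(1-q)^2}-\frac{1}{2(1-q)}+\cdots$ of the Lambert/Eisenstein series $\sum_h q^h/(1-q^{h+1})^2$; for general $k$ the coefficient of $\sqrt{1-4z}$ is a boundary term in the Euler--Maclaurin (Mellin) analysis of $\sum_h q^h(1-q^h)^{k-1}(1-q^{h+1})^{-(k+1)}$ as $q\to1$, computed directly or after re‑expressing the sum through the higher Lambert series $\sum_n q^n/(1-q^n)^r$ and their known quasimodular $q\to1$ behaviour. Since $[z^{m}]\sqrt{1-4z}=-2C_{m-1}\sim-\tfrac12C_{m}$, the transfer theorem then gives $[z^{m}u^k]\Phi\sim 2^{-k}C_{m}$.

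\textbf{Main obstacle.} The entire difficulty is pinning the linear coefficient to exactly $-2^{1-k}$ for every $k$ (consistency check: summing over $k$ gives $-\frac{2u}{2-u}\big|_{u=1}=-2$, the singular coefficient of $D(z)-1=\Phi(z,1)$). This is a cancellation phenomenon: each $\alpha_h(z)$ is smooth at $z=\tfrac14$, so the $\sqrt{1-4z}$‑term of $[u^k]\Phi$ is invisible term by term and arises only from the non‑uniform summation over $h$, and handling it uniformly in $k$ is the crux. A hands‑on alternative to the $q$‑series route is to run the Flajolet--Odlyzko analysis of the continued fraction $D^{\le h}$ and of $\sum_h\Phi_h$ directly, where the coalescing‑saddle asymptotics deliver the same constant.
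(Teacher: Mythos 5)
The paper does not actually prove Theorem \ref{th-132}: it quotes the result from the literature, citing the local limit theorem of Kesten and Pittel for critical branching processes \cite{pittel} and the earlier work of Harris \cite{harris}. So your analytic-combinatorics route is genuinely different from what the paper relies on, and the parts you make explicit check out: the reduction to the number of visits of a Dyck path to its maximal level, the decomposition $\Phi_h=(P_{h-1})^2\,zu/(1-zuD^{\le h-1})$, and the $q$-product formulas (indeed $\alpha_h=q^h(1-q)^2/(1-q^{h+1})^2$ and $zD^{\le h-1}=q(1-q^h)/\bigl((1+q)(1-q^{h+1})\bigr)$ follow from $D^{\le j}=(1+q)(1-q^{j+1})/(1-q^{j+2})$ with $z=q/(1+q)^2$). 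Moreover, for $k=1$ the program really does close: $S_1(q)=q^{-1}\bigl(\sum_{n\ge1}\sigma_1(n)q^n-q/(1-q)^2\bigr)$, and the classical expansion $\sum_{n\ge 1}\sigma_1(n)e^{-nt}=\pi^2/(6t^2)-1/(2t)+1/24+O(t^2)$ yields $[u^1]\Phi=(\pi^2/6-1)-\tfrac12 t+O(t^2)$ with $t=-\log q\sim 2\sqrt{1-4z}$, hence singular coefficient $-1=-2^{1-1}$, recovering Corollary \ref{132-1}.

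However, as you candidly admit, the decisive step is missing for general $k$: you assert, but do not establish, that the coefficient of $\sqrt{1-4z}$ in $[u^k]\Phi$ equals $-2^{1-k}$, and you also do not verify the $\Delta$-analyticity needed to invoke the transfer theorem (the sum over $h$ converges only for $|q|<1$, so continuing $[u^k]\Phi$ past $z=1/4$ into an indented domain genuinely requires an argument, e.g.\ via a Mellin representation). The heuristic that the number of top-level visits is $1+\mathrm{Geom}(1/2)$ because $zD^{\le h-1}\to 1/2$ at $z=1/4$ correctly predicts the answer but is not a proof, precisely for the reason you identify: each summand is analytic at $z=1/4$, and the square-root singularity emerges only from the range $h\asymp(1-4z)^{-1/2}$ of the sum, so the constant must be extracted from a uniform-in-$k$ Euler--Maclaurin or Mellin analysis of $\sum_h q^h(1-q^h)^{k-1}(1-q^{h+1})^{-(k+1)}$ as $q\to1$, which you describe but do not carry out. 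What you have is a correct and workable program with its crux unexecuted; note also that for the purposes of this paper only the $k=1$ case is used (Corollary \ref{132-1}), and that case you could complete fully along the lines above.
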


A very general theorem, that contains this result, is Theorem 2 in \cite{pittel}. An earlier, and more specific, reference is \cite{harris}. 

Setting $k=1$, we get the result that is equivalent to the question we were interested in. 
\begin{corollary} \label{132-1}
The equality \[\lim_{n\rightarrow \infty} \frac{u_n(132)}{C_n}=\frac{1}{2} \]
holds. 
\end{corollary}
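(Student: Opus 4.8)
The plan is to read off the corollary directly from Theorem \ref{th-132} via the bijection $\psi$. First I would recall the two structural facts already established in the text: that $\psi$ is a bijection from the set of 132-avoiding permutations of length $n$ to the set of rooted plane unlabeled trees on $n+1$ vertices, and that the number of longest increasing subsequences of a 132-avoiding permutation $p$ equals the number of leaves of $\psi(p)$ lying at maximum distance from the root. Consequently, $p$ has a ULIS precisely when $\psi(p)$ has exactly one leaf at maximum distance from the root.

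Next I would translate the counts across the bijection. Since $\psi$ is a bijection, $u_n(132)$ equals the number of rooted plane unlabeled trees on $n+1$ vertices having exactly one leaf at maximum distance from the root. On the other hand, the total number of rooted plane unlabeled trees on $n+1$ vertices is the Catalan number $C_n$ (this is the well-known enumeration that also underlies the statement that the 132-avoiding permutations of length $n$ are counted by $C_n$). Dividing, $u_n(132)/C_n$ is exactly the probability that a uniformly random rooted plane unlabeled tree on $n+1$ vertices has exactly one leaf at maximum distance from the root; that is, $u_n(132)/C_n = a_{n+1,1}$ in the notation of Theorem \ref{th-132}.

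Finally, I would invoke Theorem \ref{th-132} with $k=1$, which gives $\lim_{n\rightarrow \infty} a_{n+1,1} = 2^{-1} = 1/2$. Since the subsequence of $(a_{m,1})_m$ indexed by $m = n+1$ has the same limit as the full sequence, this yields $\lim_{n\rightarrow \infty} u_n(132)/C_n = 1/2$, as claimed.

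There is essentially no hard step here: the only points requiring care are the bookkeeping of the index shift between permutations of length $n$ and trees on $n+1$ vertices, and the observation that ``unique longest increasing subsequence'' corresponds to the case $k=1$ of the leaf statistic (the case $k=0$ being impossible, since every finite tree has at least one leaf at maximum distance from the root). Both points are immediate from the facts quoted above, so the content of the corollary lies entirely in Theorem \ref{th-132}.
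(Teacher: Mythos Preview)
Your proposal is correct and follows exactly the paper's approach: the paper simply remarks that setting $k=1$ in Theorem~\ref{th-132} (via the bijection $\psi$) gives the corollary. You have merely been more explicit about the index shift $u_n(132)/C_n = a_{n+1,1}$, which the paper suppresses.
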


As the proofs of Theorem \ref{th-132} are probabilistic, it is natural to ask the following question.
\begin{question} Is there a direct combinatorial proof of Corollary \ref{132-1}? \end{question}

The sequence of the numbers $u_n(132)$ is in the Online Encyclopedia of Integer sequences \cite{oeis}, as sequence A152880. Its first few entries, starting with $n=1$, are
1, 1, 3, 8, 23, 71, 229, 759, 2566. This numerical evidence raises two interesting questions. 

\begin{question} \label{qinj}  Is it true that $u_n(132) /C_n \geq 0.5$ for all $n$? In other words, is it true that for all $n$, there are at least as many 132-avoiding permutations of length $n$ 
that have a ULIS as 132-avoiding permutations of length $n$ that do not have a ULIS?
\end{question}

Needless to say, an injective proof of a positive answer to this question would be particularly interesting. 

\begin{question} \label{qmonotone} Is it true that the sequence $u_n(132)/C_n$ is monotone decreasing for $n\geq 3$? \end{question}

Note that an affirmative answer to Question \ref{qmonotone} would imply an affirmative answer to Question \ref{qinj}, because of the convergence result of Corollary \ref{132-1}.

\subsection{Involutions}
Consider the usual bijection $\phi$ from the set of 132-avoiding permutations of length $n$ to the set of Dyck paths of semilength $n$. The latter are lattice paths from $(0,0)$ to $(2n,0)$
consisting $n$ steps $(1,1)$, called U(p) steps, and $n$ steps $(1,-1)$, called D(own) steps that never go below the line $y=0$. The bijection $\phi$  is constructed inductively. If $p$ is skew decomposable, then $\phi(p)=\phi(B_1)\phi(B_2)\cdots \phi(B_k)$, where the $B_k$ are the skew blocks of $p$.  If $p$ is skew indecomposable, then $p$ is necessarily of the form $Ln$, and 
then we set $\phi(p)=U \phi (L) D$. 

A {\em peak}  in a Dyck path is a point that is immediately preceded by an U step and is immediately followed by a D step. The {\em height} of a peak is its $y$-coordinate. 

The following facts are straightforward to prove by induction. 
\begin{proposition}  \label{heights} Let $p$ be a 132-avoiding permutation. Then $p$ has a ULIS of length $k$ if and only if $\phi(p)$ has a unique peak of maximum height $k$.
\end{proposition}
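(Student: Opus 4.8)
The plan is to prove, by induction on $n=|p|$, a statement slightly stronger than the Proposition, one that tracks the full count of longest increasing subsequences rather than only the unique/non-unique dichotomy: \emph{for every $132$-avoiding permutation $p$, the length of a longest increasing subsequence of $p$ equals the maximum height of a peak of $\phi(p)$, and the number of longest increasing subsequences of $p$ equals the number of peaks of $\phi(p)$ attaining that maximum height.} Setting $k$ equal to the common value and reading ``unique'' as ``count equal to $1$'', this strengthened claim yields Proposition \ref{heights} at once. For the base cases $n\le 1$ I would adopt the conventions (consistent with treating the empty string as a permutation with a ULIS) that the empty Dyck path has maximum peak height $0$ attained by a single ``empty'' peak, matching the empty permutation whose lone longest increasing subsequence is the empty one; the case $n=1$ is just $\phi(1)=UD$, a single peak of height $1$.

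For the inductive step when $p$ is skew decomposable, write $p=B_1B_2\cdots B_\ell$ with $\ell\ge 2$ as the concatenation of its skew blocks, so that $\phi(p)=\phi(B_1)\phi(B_2)\cdots\phi(B_\ell)$. Two elementary observations do the work here. First, since every entry of $B_i$ exceeds every entry of $B_j$ whenever $i<j$, an increasing subsequence of $p$ is forced to lie inside a single block; hence the longest-increasing-subsequence length of $p$ is the maximum of those of the $B_i$, and the longest increasing subsequences of $p$ are exactly those of the blocks attaining this maximum. Second, each $\phi(B_i)$ is a Dyck path touching the $x$-axis only at its endpoints, and being a peak (and the height of a peak) is a local property, so the peaks of $\phi(p)$ are the disjoint union of the peaks of the $\phi(B_i)$ with their heights unchanged; in particular no peak is created at a junction, where the path sits at height $0$ and arrives via a $D$ step. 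Since each $B_i$ is strictly shorter than $p$, the induction hypothesis applies to every block, and the two ``take the maximum, then add up the winners'' computations match.

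For the inductive step when $p$ is skew indecomposable, $p$ ends in its largest entry, so $p=Ln$ and $\phi(p)=U\phi(L)D$. On the permutation side, any increasing subsequence of $L$ extends to one of $p$ by appending the final entry $n$, so a longest increasing subsequence of $p$ must use $n$; therefore its length is one more than the longest-increasing-subsequence length of $L$, and deleting $n$ is a bijection between the longest increasing subsequences of $p$ and those of $L$. On the path side, the copy of $\phi(L)$ inside $U\phi(L)D$ is shifted up by one unit: if $L$ is empty then $\phi(p)=UD$ has a single peak of height $1$, and if $L$ is nonempty then $\phi(L)$ begins with $U$ and ends with $D$, so the leading $U$ and trailing $D$ of $\phi(p)$ create no new peak, and the peaks of $\phi(p)$ are exactly those of $\phi(L)$ with every height increased by $1$. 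Applying the induction hypothesis to $L$ and adding $1$ to both the length and the height while leaving the counts unchanged completes the argument.

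The structural lemmas invoked above (increasing subsequences confined to a single skew block; peaks and peak heights being local; the effect of prepending a $U$ and appending a $D$) are all immediate, so the only points demanding genuine care are choosing the strengthened inductive statement so that the \emph{counts}, not merely ``unique or not'', propagate through the skew-decomposable recursion, and fixing the conventions for the empty permutation and the empty path so that the base case and the $L=\varnothing$ subcase fit the induction seamlessly.
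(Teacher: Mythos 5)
Your proof is correct and follows exactly the route the paper intends: the paper only remarks that Proposition \ref{heights} is ``straightforward to prove by induction,'' and your induction on the recursive structure of $\phi$ (increasing subsequences confined to skew blocks in the decomposable case, the $U\phi(L)D$ shift in the indecomposable case) is the standard way to carry that out. Strengthening the inductive statement to track the \emph{number} of longest increasing subsequences against the number of maximum-height peaks is the right move, since the bare uniqueness statement would not propagate through the skew-decomposable case on its own.
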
 

\begin{proposition} \label{involutions}
Let $p$ be a 132-avoiding permutation. Then $p$ is an involution if and only if $\phi(p)$ is symmetric to the vertical line $x=n$. 
\end{proposition}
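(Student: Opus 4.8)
The plan is to establish the following stronger statement, from which the proposition is immediate: the bijection $\phi$ intertwines the group-theoretic inverse map on $132$-avoiding permutations with the reflection map on Dyck paths. To make this precise, for a Dyck path $w=s_1s_2\cdots s_{2n}$ (viewed as a word in the letters $U$ and $D$) let $w^{\ast}=\bar s_{2n}\bar s_{2n-1}\cdots\bar s_1$, where $\bar U=D$ and $\bar D=U$; geometrically $w^{\ast}$ is the reflection of $w$ across the vertical line $x=n$, so $w$ is symmetric across that line precisely when $w=w^{\ast}$. Two bookkeeping identities will be used throughout: $(w_1w_2)^{\ast}=w_2^{\ast}w_1^{\ast}$ and $(UwD)^{\ast}=Uw^{\ast}D$. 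I will also use that $132$ is its own inverse, so that the inverse of a $132$-avoiding permutation is again $132$-avoiding, and that a permutation is skew indecomposable as soon as its last entry is its largest entry (if $p=\sigma\ominus\tau$ with both parts nonempty, where $\sigma\ominus\tau$ denotes the skew sum with $\sigma$ placed above and to the left of $\tau$, then the maximal entry lies among the first $|\sigma|$ positions, not the last); combined with the remark in the excerpt, a $132$-avoiding permutation is skew indecomposable if and only if it ends in its largest entry.

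The claim to prove is that $\phi(p^{-1})=\phi(p)^{\ast}$ for every $132$-avoiding permutation $p$, and I would argue by strong induction on $n=|p|$, the cases $n\le 1$ being trivial. If $p$ is skew indecomposable, then $p=Ln$ for a $132$-avoiding permutation $L$ of $\{1,\dots,n-1\}$, and $\phi(p)=U\phi(L)D$. Since $p$ fixes position $n$ and agrees with $L$ elsewhere, $p^{-1}=L^{-1}n$, which again ends in its largest entry and hence is skew indecomposable, so $\phi(p^{-1})=U\phi(L^{-1})D$. By the induction hypothesis $\phi(L^{-1})=\phi(L)^{\ast}$, whence $\phi(p^{-1})=U\phi(L)^{\ast}D=(U\phi(L)D)^{\ast}=\phi(p)^{\ast}$.

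If instead $p$ is skew decomposable with skew blocks $B_1,B_2,\dots,B_k$, then $p=B_1\ominus B_2\ominus\cdots\ominus B_k$ and $\phi(p)=\phi(B_1)\phi(B_2)\cdots\phi(B_k)$. The identity $(\sigma\ominus\tau)^{-1}=\tau^{-1}\ominus\sigma^{-1}$, applied repeatedly, gives $p^{-1}=B_k^{-1}\ominus B_{k-1}^{-1}\ominus\cdots\ominus B_1^{-1}$; moreover each $B_i^{-1}$ is skew indecomposable, since otherwise the same identity would exhibit $B_i$ as skew decomposable. Hence $B_k^{-1},\dots,B_1^{-1}$ are exactly the skew blocks of $p^{-1}$, so $\phi(p^{-1})=\phi(B_k^{-1})\cdots\phi(B_1^{-1})$. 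Each $B_i$ has length strictly less than $n$, so the induction hypothesis yields $\phi(B_i^{-1})=\phi(B_i)^{\ast}$, and therefore $\phi(p^{-1})=\phi(B_k)^{\ast}\cdots\phi(B_1)^{\ast}=(\phi(B_1)\cdots\phi(B_k))^{\ast}=\phi(p)^{\ast}$. This completes the induction.

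Granting the claim, the proposition follows at once: $p$ is an involution $\iff p=p^{-1}\iff \phi(p)=\phi(p^{-1})$ (where the nontrivial implication uses that $\phi$ is injective) $\iff \phi(p)=\phi(p)^{\ast}\iff \phi(p)$ is symmetric across the line $x=n$. The one step that is more than routine verification is the structural assertion in the skew-decomposable case, namely that inverting a $132$-avoiding permutation reverses the list of its skew blocks and inverts each one; this rests solely on the identity $(\sigma\ominus\tau)^{-1}=\tau^{-1}\ominus\sigma^{-1}$ for skew sums together with the fact that skew indecomposability is preserved under taking inverses. Everything else is the two-line propagation of the reflection $(\cdot)^{\ast}$ through concatenations and through the operation $w\mapsto UwD$.
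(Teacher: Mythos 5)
Your proof is correct, and it follows the route the paper merely gestures at (the paper offers no details beyond ``straightforward to prove by induction''): you carry out that induction over the skew block decomposition, via the natural strengthening $\phi(p^{-1})=\phi(p)^{\ast}$. All the supporting facts you invoke --- the identity $(\sigma\ominus\tau)^{-1}=\tau^{-1}\ominus\sigma^{-1}$, the preservation of skew indecomposability under inversion, and the compatibility of $(\cdot)^{\ast}$ with concatenation and with $w\mapsto UwD$ --- check out, so nothing further is needed.
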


Therefore, if $p$ is a 132-avoiding involution of length $n$ so that $\phi(p)$  has a unique peak $P$ of maximum height, then $P$ must be {\em on the vertical line} $x=n$, otherwise it would
not be unique. 

\begin{corollary} \label{paths} Let $n$ be a positive integer. Then 
the number of 132-avoiding involutions of length $n$ is equal to the number of lattice paths $\pi$ consisting of $n+1$ steps (each of which are U steps or D steps) so that each prefix and each
suffix of $\pi$ contains strictly more U steps than D steps.  
\end{corollary}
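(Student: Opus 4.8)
The plan is to prove the statement by composing three bijections, all living inside the framework of the map $\phi$ already introduced. By Proposition \ref{involutions}, the 132-avoiding involutions of length $n$ are exactly the Dyck paths of semilength $n$ that are symmetric about the vertical line $x=n$, so it suffices to biject these symmetric Dyck paths with the lattice paths $\pi$ in the statement. Note that this deals with \emph{all} 132-avoiding involutions, not merely those with a ULIS.

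First I would exploit the symmetry to reduce a symmetric Dyck path to half of its data. Writing a Dyck path as a step word $s_1s_2\cdots s_{2n}$ with height function $y_0,y_1,\dots,y_{2n}$ (so $y_0=y_{2n}=0$ and $y_j\ge 0$ for all $j$), symmetry about $x=n$ is equivalent to $y_j=y_{2n-j}$ for all $j$, i.e. $s_{2n+1-j}$ is always the step opposite to $s_j$. Hence the path is determined by its left half $s_1\cdots s_n$, and the only constraints on that half are $y_0=0$ and $y_j\ge 0$ for $0\le j\le n$; the endpoint height $y_n$ may be any value of the correct parity, and the right half is then the reflection. Thus symmetric Dyck paths of semilength $n$ correspond bijectively to left factors of Dyck paths of length $n$, that is, to lattice paths of $n$ steps from the origin that never go below the $x$-axis.

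Next I would pass from these nonnegative $n$-step paths to the $(n+1)$-step paths in the statement by prepending a single up-step: to a nonnegative path $t_1\cdots t_n$ I associate $\pi=U\,t_1\cdots t_n$. Since the original path stayed at height at least $0$, after the extra initial $U$ step every height of $\pi$ along positions $1,\dots,n+1$ is at least $1$, which says precisely that every nonempty prefix of $\pi$ has strictly more $U$ steps than $D$ steps. Conversely, the prefix condition forces the first step of $\pi$ to be $U$, and deleting it recovers a nonnegative $n$-step path, so the correspondence is a bijection. The reflection $y_j=y_{2n-j}$ that defines the symmetric Dyck path is exactly what lets the same family be described through suffixes as well: encoding a symmetric path by its left half produces the prefix formulation, while encoding it by its reflected right half produces the suffix formulation, and reversing a word while interchanging $U$ and $D$ identifies the two. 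Tracking this symmetry carefully is the point at which I would pin down in exactly what sense the prefix and the suffix descriptions of $\pi$ refer to the same family of paths.

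To close, I would verify the count as a consistency check: the $n$-step paths that stay nonnegative are the classical ballot-type paths enumerated by $\binom{n}{\lfloor n/2\rfloor}$, which is the known number of 132-avoiding involutions of length $n$. The step I expect to be the main obstacle is the exact matching of boundary conditions, namely verifying that nonnegativity of the left half translates into the strict inequalities on $\pi$ with no slack, and in particular handling the central steps $s_n,s_{n+1}$ and the parity of the center height $y_n$ correctly; an off-by-one there is precisely what would distinguish the intended characterization from a spurious one.
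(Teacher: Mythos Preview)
Your construction (take the left half of the symmetric Dyck path $\phi(p)$ and prepend a $U$) is exactly the paper's, and your verification of the \emph{prefix} condition is correct. The gap is the \emph{suffix} condition: you claim it follows from the reflective symmetry of the Dyck path, but it does not. Concretely, for $n=2$ the symmetric Dyck path $UDUD$ has left half $UD$; prepending $U$ gives $UUD$, whose suffix $D$ has $0$ up-steps and $1$ down-step. So your map, applied to all symmetric Dyck paths, does \emph{not} land in the set of bidirectional paths. Unwinding the suffix condition on $\pi=U\,s_1\cdots s_n$ gives $y_{j}<y_{n}$ for every $j<n$, i.e.\ the left half must attain its maximum \emph{strictly} at the center. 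For a symmetric Dyck path this is exactly the condition that the (necessarily central) peak is the unique peak of maximum height, which by Proposition~\ref{heights} is the ULIS condition.

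In other words, the corollary---read in context, and as used immediately afterwards via $i_n(132)=B_{n+1}$---is about $132$-avoiding involutions \emph{with a ULIS}, not all $132$-avoiding involutions; the paper's phrasing omits the qualifier, but the surrounding paragraph and the asymptotic comparison with $\binom{n}{\lfloor n/2\rfloor}$ make this clear. Your own ``consistency check'' should have flagged the problem: the nonnegative $n$-step prefixes are indeed counted by $\binom{n}{\lfloor n/2\rfloor}$, which is the number of \emph{all} $132$-avoiding involutions, whereas the bidirectional paths of length $n+1$ are the strictly smaller numbers $B_{n+1}$ (for $n=2$: $\binom{2}{1}=2$ versus $B_3=1$). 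Once you restrict to involutions with a ULIS, your map and the paper's coincide, and the suffix condition is precisely what the ULIS hypothesis supplies.
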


\begin{proof} If $p$ is a 132-avoiding involution of length $n$, just prepend the first half of  $\phi(p)$ with an U step. 
\end{proof} 

The lattice paths occurring in Corollary \ref{paths} have been studied in numerous papers, such as \cite{hackl,bousquet,zhao}. The most relevant enumerative result is proved in \cite{hackl}, where they are called {\em bidirectional
ballot sequences}, and the number of such paths of length $n$ is denoted by $B_n$. The result is that

\[\frac{B_n}{2^n}  \sim  \frac{1}{4n} + \frac{1}{6n^2} + O \left(\frac{1}{n^3} \right ) .\]
In particular, \[i_{n}(132)=B_{n+1} \sim \frac{2^{n+1}}{4(n+1)} = \frac{2^{n-1}}{n+1}.\]
On the other hand, it is well-known that the {\em total} number of 132-avoiding involutions of length $n$ is $I_n(132)={n \choose \lfloor n/2 \rfloor} \sim
 \sqrt{\frac{2}{\pi}} \frac{2^n}{\sqrt{n} }$.   Therefore, the probability that a 132-avoiding involution of length $n$ chosen uniformly at random has a ULIS is about $c / \sqrt{n}$. 

\section{The pattern 321} \label{sec-321}
\subsection{Permutations}
The case of $q=321$ is surprisingly difficult to handle. While we are not able to exhibit an explicit formula for the numbers $u_{321}(n)$, we are able to show that $\lim_{n\rightarrow \infty}
\left(u_n(321) \right)^{1/n} =4$, and that the generating function $u_{321}(z)$ is not rational. 

In what follows, we assume that the reader is familiar with the basic properties of the Robinson-Schensted (RS) correspondence from the set of all permutations of length $n$ to the set of
pairs of Standard Young Tableaux of the same shape on $n$ boxes. These basic properties, which we will call well-known, are discussed in detail in Bruce Sagan's book \cite{sagan}.  It is well-known that the RS bijection maps 321-avoiding permutations into pairs of SYT of the same shape that have
at most two rows. If, in addition, the permutation is of length $2m$, and has no ULIS, then its image is necessarily a pair of SYT of shape $(m,m)$, that is, consisting of two rows of $m$
boxes each. It is well-known that the number of Standard Young Tableaux of that shape is $C_m$, and this implies that the number of {\em pairs} of Standard Young Tableaux of that shape
is $C_m^2$. Note that with slight abuse of language, we will call the shape of the Standard Young Tableaux associated to $p$ by the RS correspondance the {\em shape of} $p$.

A {\em left-to-right maximum} in a permutation is an entry that is larger than all entries on its left, while a right-to-left minimum is an entry that is smaller than all entries on its right.
For instance, in 21354, the left-to-right maxima are 2, 3, and 5, while the right-to-left minima are 1, 3, and 4.

We are going prove a lower bound for the number of $u_{2m+1}(321)$. Our main tool is a bijection of Claesson and Kitaev \cite{claesson}. This bijection, which we will call $f$,  maps the set of all 321-avoiding permutations into the set of {\em indecomposable} 321-avoiding permutations as follows.

Let $p$ be a 321-avoiding permutation of length $n$. 
Locate all left-to-right maxima on the right of the entry 1 that are not right-to-left minima, and underline them. Insert a new maximum entry $n+1$ immediately to the left of the entry 1, and underline it. Finally, cyclically translate the set of underline entries one notch to the left so that $\underline{n+1}$ becomes the rightmost underlined entry to get $f(p)$. 

\begin{example} If $p=35124786$, then we underline 7 and 8, and insert 9, to get $35\underline{9}124\underline{7}\underline{8}6$. After a cyclical translation of the underlined entries to the left, we get
$f(p)=35\underline{7}124\underline{8}\underline{9}6$. \end{example}

\begin{lemma} The map $f$ described above is a bijection from the set of all 321-avoiding permutations of length $n$ to the set of all indecomposable 321-avoiding permutations of length $n+1$. \end{lemma}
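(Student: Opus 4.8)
The plan is to prove the Lemma by exhibiting the inverse map explicitly; this is essentially the verification in \cite{claesson}, which I sketch. Throughout, write $t$ for the position of the entry $1$ in $p$ and let $v_2<v_3<\dots<v_k$ be the entries of $p$ that get underlined, listed in increasing order (they are left-to-right maxima of $p$ occurring after the entry $1$, so their values and positions increase together); set $v_1=n+1$. After inserting $n+1$ immediately left of $1$ and cyclically shifting the underlined values one notch to the left, the values $v_2,v_3,\dots,v_k,v_1$ occupy, in this order, a fixed increasing sequence of positions $j_1<j_2<\dots<j_k$, where $j_1=t$ (the slot just before the entry $1$) and $j_2,\dots,j_k$ are the positions occupied by $v_2,\dots,v_k$ after the insertion. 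Two structural observations control the argument. First, since $v_2<\dots<v_k<n+1$, the underlined entries of $f(p)$ read left to right form an \emph{increasing} subsequence, with $n+1$ at the rightmost underlined position $j_k$. Second, each $v_i$ with $i\ge 2$ is a left-to-right maximum of $p$, hence in $f(p)$ it exceeds every non-underlined entry weakly to its left; it follows that $j_1$ is a left-to-right maximum of $f(p)$ and that no non-underlined position lying between two consecutive underlined positions is a left-to-right maximum of $f(p)$. Therefore the underlined positions of $f(p)$ are exactly $j_1$ together with all left-to-right maxima of $f(p)$ at or after $j_1$ --- a description internal to $f(p)$. This makes the inverse $g$ forced: given $r$, step one place left from the entry $1$ to get $j_1$, declare the underlined positions to be $j_1$ and all later left-to-right maxima of $r$, cyclically shift the values there one notch to the \emph{right} (restoring $n+1$ to $j_1$), and delete that copy of $n+1$.

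Next I would show $f(p)$ avoids $321$. Suppose $a>b>c$ occurred at increasing positions in $f(p)$. If two of $a,b,c$ are underlined they form an increasing pair of $f(p)$, which is impossible. If exactly one of $b,c$ is underlined, it equals some $v_s$, a left-to-right maximum of $p$; but every non-underlined entry of $f(p)$ to its left lies to the left of $v_s$ in $p$ and is therefore smaller than $v_s$, contradicting $a>v_s$ or $b>v_s$. If none of $a,b,c$ is underlined, then $a,b,c$ appear in this order among the non-underlined entries of $f(p)$, which in order form a subsequence of $p$ (removing $n+1$ undoes the insertion), producing a $321$ in $p$. In the remaining case only $a$ is underlined: if $a=v_s$, then comparing positions in $p$ we either find that $v_s$ precedes $b$ in $p$, giving the $321$-pattern $v_s>b>c$ there, or else $b$ lies in $p$ strictly between the entry $1$ and $v_s$, in which case $b$ is not a left-to-right maximum of $p$ (yielding a $321$ in $p$ via an earlier, larger entry), or it is one, and then $b<v_s$ together with its position forces $b$ to be among the $v_i$ --- hence underlined, absurd --- unless $b$ is a right-to-left minimum of $p$, which is impossible since $c<b$ occurs to its right; the case $a=n+1$ is similar, with the observation that $b$ would then be a left-to-right maximum of $p$ lying after all of $v_2,\dots,v_k$, hence a right-to-left minimum, again contradicted by $c$.

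To see $f(p)$ is indecomposable, suppose $m$ were a proper decomposition point of $f(p)$. The entry $1$ sits at position $j_1+1$ and must be among the first $m$ entries, so $m>j_1$; the entry $n+1$ sits at position $j_k$ and must not be, so $m<j_k$. Let $i$ be largest with $j_i\le m$, so $1\le i\le k-1$; then the underlined value $v_{i+1}=f(p)_{j_i}$ is among the first $m$ entries, giving $v_{i+1}\le m$. On the other hand $v_{i+1}$ is a left-to-right maximum of $p$ at position $j_{i+1}-1$ in $p$, so $v_{i+1}\ge j_{i+1}-1\ge m$; thus $v_{i+1}=m=j_{i+1}-1$ and $v_{i+1}$ is a fixed point of $p$. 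A left-to-right maximum that is a fixed point has $\{1,\dots,m\}$ entirely to its left in $p$ and $\{m+1,\dots,n\}$ entirely to its right, so it is a right-to-left minimum of $p$ --- contradicting that $v_{i+1}$ is underlined.

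Finally, for the reverse direction: given any indecomposable $321$-avoiding $r$ of length $n+1$, the entry $1$ is not first (else $\{1\}$ splits off), and its left neighbour is a left-to-right maximum of $r$ (otherwise a larger earlier entry, that neighbour, and the entry $1$ form a $321$), so $g(r)$ is well defined. One then checks that $g(r)$ avoids $321$ by the same style of argument as above --- the cyclic shift only permutes the increasing block of ``large'' underlined entries --- and that $g(f(p))=p$ and $f(g(r))=r$, using the internal characterisation of the underlined positions; here indecomposability of $r$ is what guarantees, for instance, that if $n+1$ precedes the entry $1$ in $r$ then it must be its immediate predecessor (so $k=1$), which is exactly the case the reverse construction needs. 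I expect the $321$-avoidance bookkeeping in both directions to be the only genuinely delicate point, because it requires carrying positions back and forth across the insertion and deletion of $n+1$ and the cyclic shift and disposing of the boundary cases ($a=n+1$, $k=1$, the entry $1$ at or near the front) cleanly; the indecomposability and the construction of $g$ are routine once the description ``underlined $=$ $j_1$ together with the subsequent left-to-right maxima'' is available.
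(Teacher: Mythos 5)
Your proposal is correct and follows the same overall strategy as the paper's proof: establish indecomposability of $f(p)$ directly, and obtain bijectivity by showing that the set of underlined entries can be recovered from $f(p)$ alone, which yields an explicit inverse. Within that shared outline there are two genuine differences. For indecomposability, the paper argues that if the last $i$ positions of $f(p)$ held the $i$ largest values, then the entry $n+1$ must have been moved into that suffix, displacing an underlined entry $x$ out of it; since $x$ is not a right-to-left minimum, some smaller entry remains in the suffix, a contradiction. You instead pin down a hypothetical cut point $m$ by the squeeze $v_{i+1}\le m\le j_{i+1}-1\le v_{i+1}$, forcing $v_{i+1}$ to be a left-to-right maximum sitting at position $v_{i+1}$, hence a right-to-left minimum of $p$, contradicting its being underlined; this is a clean, self-contained alternative. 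Second, you verify that $f(p)$ actually avoids $321$ and that no extraneous left-to-right maxima of $f(p)$ appear at or after $j_1$, checks the paper omits entirely (it relies on the cited Claesson--Kitaev source for them), so your write-up is in this respect more complete than the paper's. Your internal characterization of the underlined set ($j_1$ plus all subsequent left-to-right maxima of $f(p)$) is superficially different from the paper's (left-to-right maxima that are not right-to-left minima, starting from the entry left of $1$), but the two agree on indecomposable permutations, since an entry that is simultaneously a left-to-right maximum and a right-to-left minimum would itself be a decomposition point. Two small blemishes: the fact that $n+1$ must immediately precede the entry $1$ whenever it comes before it follows from $321$-avoidance (any intervening entry $x$ gives the pattern $(n+1)\,x\,1$), not from indecomposability as you state; and the surjectivity direction ($g(r)$ avoids $321$ and $f(g(r))=r$) remains a sketch in your write-up --- though the paper's own proof is no more complete on that point, as it only exhibits the left inverse.
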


\begin{proof}
First, let us assume that $f(p)$ is decomposable. That means that there exists an $i$ so that the rightmost $i$ entries of $f(p)$ are the largest $i$ entries of $f(p)$, that is, 
the entries $\{n-i+2, n-i+3,\cdots , n+1\}$. This is clearly not possible if the entry $n+1$ does not get moved into one of these last $i$ positions, so we can assume that it does. 
However, in that case, one of the last $i$ entries of $p$ gets moved {\em out} of one of the last $i$ positions, and that entry $x$ is a left-to-right maximum that is {\em not} a right-to-left
minimum. So, there is a right-to-left minimum $y$ on the right of $x$ that stays in one of the last $i$ positions. This contradicts the assumption that the rightmost $i$ positions of $f(p)$
contain the $i$ largest entries of $f(p)$. 

In order to prove that $f$ is a bijection, it suffices to show that it has an inverse. And that is easy, since the set of underlined entries is easy to recover from $f(p)$, by simply taking
the entry immediately on the left of the entry 1, then moving to the right and selecting all left-to-right maxima that are not right-to-left minima. 
\end{proof}

   The {\em rank} of an entry is the length of the longest increasing subsequence
ending in that entry. It follows that entries of the same rank form a decreasing subsequence. Therefore, in a 321-avoiding permutation, there are at most two entries of each rank. 

If the permutation $p$ has shape $(m,m)$, then each entry has rank at most $m$, and  there are no decreasing subsequences of length three or more, so $p$ has {\em exactly}  two entries of each rank $i$. Let us denote the leftmost entry of rank $i$ by $A_i$ and the rightmost entry of rank $i$ by $B_i$.
Then $A_i$ is a left-to-right maximum, and $B_i$ is a right-to-left minimum. For shortness, we will refer to the $A_i$ as the {\em large entries} and the $B_i$ as the {\em small entries}. 

\begin{example} If $p=351624$, then $A_1=3$, $B_1=1$, $A_2=5$, $B_2=2$, $A_3=6$, and $B_3=4$. \end{example}

The following property of the map $f$ is crucial for us. 

\begin{proposition} \label{notgrow}
Let $p$ be a permutation of shape $(m,m)$. Then $f$ does not increase the rank of any entry. That is, the rank of the entry $j$ in $f(p)$ is at most as large as the rank of the entry $j$
in $p$. 
\end{proposition}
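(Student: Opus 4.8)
\emph{The idea.} The plan is to first determine exactly what $f$ does to a permutation of shape $(m,m)$, and then to bound the rank of an entry of $f(p)$ by lifting a longest increasing subsequence of $f(p)$ back to $p$.

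\emph{Step 1: describing $f$ on a shape $(m,m)$ permutation $p$.} Since $p$ avoids $321$, every entry of $p$ is a left-to-right maximum or a right-to-left minimum (an entry that is neither has a larger entry on its left and a smaller entry on its right, a $321$ pattern). Thus the large entries $A_1<\cdots<A_m$ are exactly the left-to-right maxima, the small entries $B_1<\cdots<B_m$ are exactly the right-to-left minima, and since there are only $2m$ entries these two lists are disjoint; in particular no $A_i$ is a right-to-left minimum, as $B_i<A_i$ lies to its right. Let $t$ be the position of the entry $1=B_1$. The $t-1$ entries to the left of $1$ must be increasing (two of them in the wrong order, followed by $1$, give a $321$), so they are left-to-right maxima, hence are $A_1,\dots,A_{t-1}$ in order; consequently $A_t,\dots,A_m$ all lie to the right of $1$. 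Therefore the entries underlined in the construction of $f$ --- the left-to-right maxima to the right of $1$ that are not right-to-left minima --- are exactly $A_t,\dots,A_m$. Unwinding the cyclic shift then shows that $f(p)$ is $p$ with a new slot holding $2m+1$ inserted immediately to the left of $1$ and the underlined slots relabeled as follows: the new slot receives $A_t$, the old slot of $A_i$ receives $A_{i+1}$ for $t\le i\le m-1$, the old slot of $A_m$ receives $2m+1$, and every other entry keeps its value and (up to the one-place shift caused by the insertion) its position. In particular the entries of $p$ other than $A_t,\dots,A_m$ retain their mutual order, and each of $A_t,\dots,A_m$ ends up at an earlier position.

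\emph{Step 2: lifting a longest increasing subsequence.} Fix $j\in\{1,\dots,2m\}$ and let $T$ be a longest increasing subsequence of $f(p)$ ending with the value $j$, so that $|T|$ is the rank of $j$ in $f(p)$; since $2m+1>j$, the value $2m+1$ does not occur in $T$, hence every value of $T$ is a value of $p$. It suffices to show that the values of $T$ occur in $p$ in the order they occur in $T$, for then $T$ is an increasing subsequence of $p$ ending with $j$, giving that the rank of $j$ in $p$ is at least $|T|$, which is the claim. By transitivity it is enough to show, for consecutive values $v<w$ of $T$ (so $v$ precedes $w$ in $f(p)$), that $v$ precedes $w$ in $p$; here I would split into four cases according to which of $v,w$ lie in $\{A_t,\dots,A_m\}$. (1) If neither lies there, $f$ preserved their relative order, so $v$ precedes $w$ in $p$. (2) If $w=A_i$ with $t\le i\le m$: by the relabeling, every value lying to the left of $A_i$ in $f(p)$ is among $A_1,\dots,A_{i-1}$, the entry $1$, and the small entries that precede $A_{i-1}$ in $p$, each of which precedes $A_i$ in $p$; so $v$ precedes $A_i$ in $p$. (3) If $v=A_i$ with $t\le i\le m$ and $w\notin\{A_t,\dots,A_m\}$, then $w>A_i$, and since $A_i$ is a left-to-right maximum of $p$, $w$ cannot lie to the left of $A_i$ in $p$; so $v$ precedes $w$ in $p$. (4) If $v=A_i$ and $w=A_k$ with $t\le i,k\le m$, then $v<w$ forces $i<k$, so $A_i$ precedes $A_k$ in $p$. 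In every case $v$ precedes $w$ in $p$, which completes the argument.

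\emph{The main obstacle.} The only step with real content is the structural description of $f$ on a shape $(m,m)$ permutation --- identifying which entries are underlined and where they get relabeled --- and I expect this to be the main difficulty; once it is in place, the rank comparison is the short four-case analysis above. (In fact the same analysis shows that $f$ preserves the rank of every entry of $p$, but only the stated inequality is needed.)
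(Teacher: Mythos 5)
Your proof is correct, and it is a genuinely more explicit route than the one in the paper, even though both rest on the same underlying strategy of pulling increasing subsequences of $f(p)$ back to $p$. The paper splits the argument by the type of the entry: for a large entry $A_i$ it observes that $f$ only moves $A_i$ weakly to the left, so no entry ``leapfrogs'' it and any increasing subsequence of $f(p)$ ending in $A_i$ already lives in $p$; for a small entry $B_i$ it takes a minimal counterexample and derives a contradiction from the rank structure (the offending subsequence would have to pass through some $A_j$ of rank at least $i$, which is impossible since $A_j\geq A_i>B_i$). You instead first pin down exactly what $f$ does to a shape-$(m,m)$ permutation --- the underlined entries are precisely $A_t,\dots,A_m$, where $t$ is the position of $1$, and the relabeling only shifts these values to earlier slots --- and then prove the single stronger statement that every increasing subsequence of $f(p)$ avoiding $2m+1$ is already an increasing subsequence of $p$, via the four-case check on consecutive pairs. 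Your version buys a cleaner, uniform argument (no separate treatment of small entries, no minimal-counterexample step) and the bonus observation that $f$ in fact preserves ranks exactly; the cost is the front-loaded structural analysis of $f$, which the paper's proof largely avoids by working only with the relevant rank inequalities. Both proofs are sound; yours makes explicit some order-preservation facts that the paper's treatment of large entries passes over rather quickly.
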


\begin{proof} 
First, let us prove the statement for large entries. If $A_i$ is a large entry in $p$, then $f(p)$ either keeps $A_i$ fixed, or moves $A_i$ to the left. So in either case,  no entry leapfrogs $A_i$. So, the set
of entries that are on the left of $A_i$ that are smaller than $A_i$ does not acquire any new elements, therefore all increasing subsequences in $f(p)$ that end in $A_i$ are also increasing
subsequences in $p$ that end in $A_i$. 

Now let us prove the statement for small entries. Let us assume that $B_i$ is a counterexample to our claim with $i$ minimal. That means that in $f(p)$,  there is an increasing subsequence longer
than $i$ that ends in $B_i$.  As the small entries are not displaced by $f$, that subsequence must contain a large entry; let $A_j$ be the largest such entry in that subsequence. That means that
$A_j$ is the last element of that subequence before $B_i$.  Then $A_j$ must have rank at least $i$, since $B_i$ supposedly has rank larger than $i$. As the rank of
$A_j$ is not increased by $f$, that means that $A_j$ has rank $i$ or more in $p$. That is a contradiction, since $A_i>B_i$, so $A_j>B_i$, and so $A_j$ cannot be part of an increasing subsequence ending in $B_i$. 
\end{proof}


\begin{lemma} \label{injective}
Let $p$ be a permutation of shape $(m,m)$. Then $f(p)$ has a ULIS. 
\end{lemma}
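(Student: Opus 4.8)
The plan is to identify the unique longest increasing subsequence of $f(p)$ explicitly: I claim it is $A_1 < A_2 < \cdots < A_m < (2m+1)$, the large entries of $p$ followed by the maximum that $f$ inserts. Everything rests on two ingredients: Proposition \ref{notgrow}, and a handful of elementary structural facts about a permutation $p$ of shape $(m,m)$. Of these facts I would first record that $p$ has exactly $m$ left-to-right maxima (its left-to-right maxima form an increasing subsequence, hence number at most $m$; its non-left-to-right maxima likewise form an increasing subsequence, forcing at least $m$ left-to-right maxima), so the left-to-right maxima are precisely $A_1,\dots,A_m$; that, comparing ranks, they occur in the order $A_1<A_2<\cdots<A_m$ in both value and position; that $B_1=1$ and that the right-to-left minima of $p$ are exactly $B_1,\dots,B_m$, so that $p_{2m}=B_m$. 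In particular the large and small entries partition $\{1,\dots,2m\}$ and no entry is simultaneously a left-to-right maximum and a right-to-left minimum.

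For the upper bound $\mathrm{LIS}(f(p))\le m+1$: since $p$ has shape $(m,m)$, every entry of $p$ has rank at most $m$, so by Proposition \ref{notgrow} every entry of $f(p)$ in $\{1,\dots,2m\}$ still has rank at most $m$. Hence an increasing subsequence of $f(p)$ avoiding the entry $2m+1$ has length at most $m$, while one using $2m+1$ must end there and so has length at most $m+1$. For the matching lower bound I would trace the two passes of $f$. The entries of $p$ to the left of $1=B_1$ form a prefix $A_1\cdots A_j$ of large entries (anything to the left of $1$ is a left-to-right maximum, else a $321$ appears), so the underlined entries are exactly $\{A_{j+1},\dots,A_m\}$. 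Inserting $2m+1$ just before $B_1$ and then cyclically rotating the underlined set $2m+1,A_{j+1},\dots,A_m$ leaves $A_1,\dots,A_j$ in place, drops $A_{j+1}$ into the slot just before $B_1$, moves $A_{i+1}$ into the former slot of $A_i$ for $j+1\le i\le m-1$, and moves $2m+1$ into the former slot of $A_m$. Reading positions left to right, $A_1,\dots,A_m,2m+1$ occur in this order in $f(p)$; they form an increasing subsequence of length $m+1$, so $\mathrm{LIS}(f(p))=m+1$.

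For uniqueness, let $c_1<c_2<\cdots<c_{m+1}$ be any increasing subsequence of $f(p)$; it must end at $c_{m+1}=2m+1$, so $c_1<\cdots<c_m$ is an increasing subsequence of length $m$ inside $\{1,\dots,2m\}$ lying to the left of $2m+1$ in $f(p)$. I would prove $c_i=A_i$ by downward induction on $i$. Each $c_i$ ends an increasing subsequence of length $i$ in $f(p)$, hence has rank $\ge i$ in $f(p)$, hence rank $\ge i$ in $p$ by Proposition \ref{notgrow}; since $p$ has shape $(m,m)$ this forces $c_i\in\{A_i,\dots,A_m\}\cup\{B_i,\dots,B_m\}$. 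For $i=m$ this gives $c_m\in\{A_m,B_m\}$, but $B_m=p_{2m}$ is the last entry of $f(p)$, hence sits to the right of $2m+1$, so $c_m=A_m$. For the inductive step the inequality $c_i<c_{i+1}=A_{i+1}$ rules out every $A_{i'}$ with $i'>i$, so it remains to exclude $c_i=B_{i'}$ with $i'\ge i$. Using the positional description of $f$ above, ``$B_{i'}$ precedes $A_{i+1}$ in $f(p)$'' forces ``$B_{i'}$ precedes $A_i$ in $p$'' when $i\ge j+1$, and is impossible outright when $i\le j$ (no small entry precedes $A_{i+1}$ in $f(p)$ in that range); and ``$B_{i'}$ precedes $A_i$ in $p$ with $i'\ge i$'' contradicts shape $(m,m)$, since then $B_1<B_2<\cdots<B_{i'}$ would be an increasing subsequence of length $i'\ge i$ lying before and below the left-to-right maximum $A_i$, forcing $\mathrm{rank}(A_i)\ge i'+1>i$. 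Hence $c_i=A_i$, and the induction completes the proof.

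The main obstacle I anticipate is the positional bookkeeping in the middle of the argument. The map $f$ is defined in two passes --- insert the new maximum, then cyclically rotate the underlined entries --- and one must chase the slots occupied by the large entries and by $2m+1$ through both passes, both to see that $A_1,\dots,A_m,2m+1$ end up in the claimed order and to carry out the translation ``$B_{i'}$ before $A_{i+1}$ in $f(p)$ $\Rightarrow$ $B_{i'}$ before $A_i$ in $p$'' in the uniqueness step. Nothing here is deep, but it is where an off-by-one slip is most likely, so I would fix notation for the positions of the $A_i$ in $p$ at the outset and handle the boundary indices ($i\le j$ versus $i\ge j+1$, and $i=m$) explicitly.
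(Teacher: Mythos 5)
Your proof is correct, and it rests on the same two pillars as the paper's: the candidate ULIS is the sequence of left-to-right maxima $A_1,\dots,A_m,2m+1$ of $f(p)$, and Proposition \ref{notgrow} supplies all the rank bounds. The only real divergence is in the uniqueness step. The paper takes the rightmost small entry $B_i$ of a hypothetical competing subsequence of length $m+1$ and bounds the part up to $B_i$ by $i$ and the tail of large entries after $B_i$ by $m-i$; you instead first force the competitor to end in $2m+1$ and then pin down each remaining term as $A_i$ by downward induction on rank and position. Your organization is, if anything, tighter on one point: the paper's count $i+(m-i)\le m$ does not explicitly account for the possibility that the new entry $2m+1$ (which is neither a large nor a small entry of $p$) contributes one further term to the tail after $B_i$, which would only give $m+1$; closing that requires the positional fact that $A_{i+1}$ lands weakly to the left of $B_i$ in $f(p)$, which your slot-chasing supplies, while your induction also sidesteps the issue by treating $2m+1$ separately from the outset. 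The positional bookkeeping you flagged as the main risk is carried out correctly, including the boundary cases $i\le j$ versus $i\ge j+1$ and the identification $p_{2m}=B_m$.
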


\begin{proof}
Note that the map $f$ does not change the set of right-to-left minima of $p$, so $f(p)$ has $m$ right-to-left minima. However, $f(p)$ has $m+1$ left-to-right maxima, namely the $m$
left-to-right maxima of $p$, and the new maximum  entry $2m+1$. So, the left-to-right maxima form an increasing subsequence of length $m+1$ in $f(p)$. We will prove that this sequence is
the ULIS of $p$. 

Let us assume the contrary, that is, that there is an increasing subsequence of length $m+1$ 
in $f(p)$ that contains at least one small entry. Let $B_i$ be the rightmost small entry in such a subsequence. Note that this $B_i$ is necessarily (weakly) on the right of 1 as 
$B_1=1$ is the leftmost small entry. By Proposition \ref{notgrow}, the rank of $B_i$ in $f(p)$ is at most $i$. On the other hand, any increasing subsequence of large entries 
that are on the right of $B_i$ in $f(p)$ corresponds to such a subsequence in $p$, and therefore, is of length at most $m-i$, since the largest rank of any entry in $p$ is $m$. (Note that both in $p$ and $f(p)$, large entries on the right of
$B_i$ are automatically larger than $B_i$.) This gives the chosen sequence in $f(p)$ is of length at most $m$.
\end{proof}

As $f$ is injective, Lemma \ref{injective} shows that the number of permutations of shape $(m+1,m)$ that have a ULIS is at least the number of permutations of shape $(m,m)$, that is, 
\begin{equation} \label{lowerbound} C_m^2 = \frac{{2m \choose m}^2}{(m+1)^2} \sim \frac{4^{2m}}{m^3\pi} \sim \frac{2}{\pi} \frac{4^n}{n^3} ,\end{equation}
where $n=2m+1$. 
Here we used Stirling's formula that states that $m! \sim \left (\frac{m}{e} \right )^m \sqrt{2m\pi}$.

\begin{theorem} \label{321-size} The equality 
\[\lim_{n\rightarrow \infty} \left (u_{n}(321) \right) ^{1/n}  = 4 \]
holds. 
\end{theorem}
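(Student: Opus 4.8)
The plan is to sandwich $\left(u_n(321)\right)^{1/n}$ between two sequences that both converge to $4$, and then to upgrade the resulting statements about $\limsup$ and $\liminf$ into a genuine limit by appealing to Proposition~\ref{fekete-lim}. Note first that $321$ is indecomposable: neither the cut $3|21$ nor the cut $32|1$ has every entry before the bar smaller than every entry after it. Hence Proposition~\ref{fekete-lim} applies and $L:=\lim_{n\to\infty}\left(u_n(321)\right)^{1/n}$ exists; it therefore suffices to pin down the value of $L$ along any one convenient subsequence of $n$.

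For the upper bound, every permutation counted by $u_n(321)$ is in particular a $321$-avoiding permutation of length $n$, so $u_n(321)\le C_n$. Since Stirling's formula gives $C_n\sim 4^n/(\sqrt{\pi}\,n^{3/2})$, we get $C_n^{1/n}\to 4$, and therefore $L=\limsup_{n\to\infty}\left(u_n(321)\right)^{1/n}\le 4$. For the matching lower bound I would invoke the inequality already recorded in~(\ref{lowerbound}): taking $n=2m+1$, the number of $321$-avoiding permutations of length $n$ that have a ULIS is at least the number $C_m^2$ of permutations of shape $(m,m)$, because by Lemma~\ref{injective} the Claesson--Kitaev map $f$ injects the latter set into the former. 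By the estimate displayed in~(\ref{lowerbound}), $C_m^2\sim \frac{2}{\pi}\cdot\frac{4^{n}}{n^3}$, so $\left(u_{2m+1}(321)\right)^{1/(2m+1)}\to 4$ as $m\to\infty$. Since $L$ exists, it coincides with this subsequential limit, so $L\ge 4$; together with the upper bound this forces $L=4$.

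The only genuinely substantial ingredient is the exponential lower bound~(\ref{lowerbound}) itself, which rests on the two facts proved above: the Claesson--Kitaev bijection $f$, together with the rank-monotonicity of Proposition~\ref{notgrow}, which is exactly what guarantees that $f$ carries a shape-$(m,m)$ permutation to a length-$(2m+1)$ permutation with a ULIS (Lemma~\ref{injective}). Once those are in hand, the proof of the theorem is pure bookkeeping: a trivial upper bound $u_n(321)\le C_n$, two applications of Stirling's formula, and one invocation of Fekete's lemma through Proposition~\ref{fekete-lim}. If one preferred to sidestep Proposition~\ref{fekete-lim}, one could instead observe that $u_n(321)$ is nondecreasing, since $u_1(321)\,u_n(321)\le u_{n+1}(321)$ with $u_1(321)=1$, and deduce $\left(u_n(321)\right)^{1/n}\to 4$ from the odd-index lower bound applied at $n-1$ together with the $C_n$ upper bound; but citing Proposition~\ref{fekete-lim} is cleaner.
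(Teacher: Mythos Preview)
Your proof is correct and follows essentially the same approach as the paper: use Proposition~\ref{fekete-lim} to ensure the limit exists, bound above by $u_n(321)\le C_n$, and bound below via~(\ref{lowerbound}), which rests on Lemma~\ref{injective}. You have simply made explicit the details (the check that $321$ is indecomposable, the Stirling estimates, and the observation that the lower bound runs along the odd subsequence) that the paper leaves implicit in its three-sentence argument.
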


\begin{proof} Proposition \ref{fekete-lim} shows that the limit $L$ in the statement of the theorem exists.  On the other hand, (\ref{lowerbound})
proves that $4\leq L$, while the fact that $u_{n}(321)\leq C_n$ proves that $L\leq 4$. 
\end{proof}

Even though we do not know the explicit form of $u_{321}(z)$, we can use the result of Theorem \ref{321-size} and a method recently used in \cite{bona-nonrat} to prove that $u_{321}(z)$ is
not rational. 

\begin{theorem} The generating function $u_{321}(z)$ is not rational. \end{theorem}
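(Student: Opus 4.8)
The plan is to contrast the \emph{subexponential} correction factor $n^{-3/2}$ carried by $C_n$ with the purely polynomial correction factors forced on the coefficients of a rational power series. Suppose, for contradiction, that $u_{321}(z)=P(z)/Q(z)$ is rational, with $P,Q$ polynomials, $Q(0)\neq 0$, and $\gcd(P,Q)=1$. By Theorem~\ref{321-size} the exponential growth rate of $u_n(321)$ is exactly $4$, so the radius of convergence of the series $u_{321}(z)$ equals $1/4$; hence the pole of $u_{321}$ closest to the origin has modulus $1/4$, and by Pringsheim's theorem $z=1/4$ is itself a pole.

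Next I would apply a partial fraction decomposition to $u_{321}(z)$ and read off the resulting closed form for the coefficients: $u_n(321)=\sum_{j}P_j(n)\,\alpha_j^{-n}$, a finite sum over the distinct poles $\alpha_j$, where each $P_j$ is a nonzero polynomial whose degree is one less than the order of $\alpha_j$. Isolating the terms coming from poles of minimal modulus $1/4$ that are of maximal order $d+1$ among those, and writing $1/\alpha_j=4\omega_j$ with $|\omega_j|=1$, one obtains
\[ u_n(321)=n^d\,4^n\bigl(g(n)+o(1)\bigr),\qquad g(n)=\sum_j c_j\,\omega_j^n, \]
where the (nonempty, finite) sum defining $g$ runs over the dominant poles, all $c_j\neq 0$, and the $\omega_j$ are distinct points on the unit circle. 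The key elementary fact is that such a nonzero exponential polynomial cannot tend to $0$: averaging $|g(n)|^2$ over $n\le N$ makes the cross terms disappear and leaves $\sum_j|c_j|^2>0$, so $\limsup_n|g(n)|>0$. Since $d\geq 0$, this gives $\limsup_{n\to\infty}u_n(321)\,4^{-n}>0$; in particular $u_n(321)$ is \emph{not} $o(4^n)$.

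To finish, I would invoke the trivial bound $u_n(321)\leq C_n$ together with Stirling's formula, which yields $C_n\sim 4^n/(\sqrt{\pi}\,n^{3/2})$, hence $u_n(321)=O(4^n n^{-3/2})=o(4^n)$. This contradicts the previous paragraph, so $u_{321}(z)$ is not rational.

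The routine parts are the partial fraction bookkeeping and the Stirling estimate. The point that requires care — and the heart of the argument borrowed from \cite{bona-nonrat} — is the implication ``$u_{321}$ rational with growth rate $4$'' $\Rightarrow$ ``$u_n(321)\neq o(4^n)$'': because there may be several poles of equal minimal modulus, one cannot simply assert $u_n(321)\sim\kappa\,n^{d}4^n$, and one must rule out cancellation among the dominant contributions, which is exactly what the averaging of $|g(n)|^2$ accomplishes. (Alternatively, one may first prove the standard positivity fact that among all poles on the circle $|z|=1/4$ the one at $z=1/4$ has the largest order and a positive coefficient in the partial fraction expansion, after which $u_n(321)\sim\kappa\,n^{d}4^n$ with $\kappa>0$ is immediate.)
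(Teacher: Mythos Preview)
Your proof is correct, but it is not the paper's argument. You argue analytically from coefficient asymptotics: a rational series with radius of convergence $1/4$ must have $u_n(321)=n^{d}4^{n}(g(n)+o(1))$ with $g$ a nonzero almost-periodic sequence, hence $u_n(321)\not=o(4^n)$, contradicting $u_n(321)\le C_n=O(4^n n^{-3/2})$. The paper instead uses the block structure of $321$-avoiders: since a $321$-avoiding permutation has a ULIS iff each of its indecomposable blocks does, one has $u_{321}(z)=1/(1-u_{321,1}(z))$, where $u_{321,1}$ counts indecomposable ones. If $u_{321}$ were rational then so would $u_{321,1}$ be, both with a pole at $z=1/4$; but then, by continuity of the (nonnegative-coefficient) series $u_{321,1}$ on $[0,1/4)$, there is $z_0\in(0,1/4)$ with $u_{321,1}(z_0)=1$, giving $u_{321}$ a singularity strictly inside $|z|<1/4$, a contradiction. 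Your route needs only the crude inequality $u_n(321)\le C_n$ and no combinatorial decomposition; the paper's route avoids partial fractions and the averaging lemma but relies on the supercritical identity. A small attribution slip: what \cite{bona-nonrat} actually supplies is the paper's supercritical argument, not the partial-fraction/averaging one you present.
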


\begin{proof} 
A 321-avoiding permutation has a ULIS if all its indecomposable blocks have a ULIS. Let $u_{n,1}(321)$ be the number of indecomposable 321-avoiding permutations with a ULIS, and let
$u_{321,1}(z)=\sum_{n\geq 1} u_{n,1}(321) z^n$. Then the equality 
\[u_{321}(z) = \frac{1}{1-u_{321,1}(z)} \] holds. Let us assume that $u_{321}(z)$ is rational, then so is $u_{321,1}(z)$. Both generating functions have their dominant singularity
at $z=1/4$, and, as they are both rational functions, those singularities are poles, implying that  at the pole $z=1/4$, the value of both generating functions goes to infinity. 
However, as all the coefficients of $u_{321,1}(z)$ are nonnegative, that implies that there is a $z_0\in (0,1/4)$ so that $u_{321,1}(z_0)=1$. That means that $u_{321}(z)$ has a singular
point at $z_0<1/4$, which is a contradiction.
\end{proof}

\subsection{Involutions}

A theorem of Sch\"utzenberger \cite{beissinger, schutz} says that if $p$ is an involution, then the number of fixed points of $p$ is equal to the number of odd columns in the Standard Young Tableau $P(p)$ into which
the RS correspondence maps $p$.  Now if $p$ is 321-avoiding, then the columns of those tableaux are of length at most two, so all odd columns must be of length 1. 
Finally, note that if $p=p_1p_2\cdots p_n$ is an involution, and $p_{i_1}<p_{i_2}<\cdots <p_{i_j}$ is an increasing subsequence in $p$, then the sequence $i_1<i_2<\cdots <i_j$ is
also an increasing subsequence in $p=p^{-1} $. So, it can be shown the only way for the involution $p$ to have a ULIS is by having a ULIS consisting entirely of {\em fixed points}. 

Let $p$ be a 321-avoiding involution with a ULIS of length $k$. Then $P(p)$ has at most two rows, of length $k$ and $n-k$. In particular, $P(p)$ has $k$ columns. On the other hand, 
as we proved in the previous paragraph, $P(p)$ must have $k$ columns of length 1. Therefore, all columns of $P(p)$ must be of length 1, so $k=n$, and $p=12\cdots n$.
This proves that $i_n(321)=1$ for all $n$. 

\section{The tivial case: the pattern 123}
If $p$ avoids 123 and has a ULIS, then that ULIS has to be of length two, if the length of $p$ is more than 1. Permutations with that property are exactly the permutations with one
non-inversion. Therefore, $u_n(123)=n-1$ for $n>1$, and $u_1(123)=1$. Such a permutation is an involution if and only if that non-inversion creates two fixed points, yielding
that $i_n(123)=0$ if $n>1$ and $n$ is odd, $i_1(123)=1$, and $i_n(123)=1$ if $n$ is even. 

\vskip 1 cm 
\begin{center} {\bf Acknowledgment} 
\end{center}
We are grateful to Zachary Hamaker for a stimulating conversation.

\end{document}